\newtheorem{thm}{Theorem}
\newtheorem{prop}[thm]{Proposition}
\author{Camilo Sanabria Malag\'on\thanks{Partially supported by Vicerrector\'ia de
Investigaciones de la Universidad de los Andes grant PEP P13.160422.030 FAPA – Camilo Sanabria.}\\Department of Mathematics\\ Universidad de los Andes, Bogot\'a, Colombia}
\title{An algorithm for computing differential equations for invariant curves}
\begin{document}

\date{}

\maketitle

\begin{abstract}
In this paper we describe an algorithm based on the Picard-Vessiot theory that constructs, given any curve invariant under a finite linear algebraic group over the complex numbers, an ordinary linear differential equation whose Schwarz map parametrizes it.
\end{abstract}

\section*{Introduction}

A classical result of Hurwitz \cite{HURWITZ1892} states that a curve of genus $g\ge 2$ can have at most $84(g-1)$ automorphisms. In \cite{KLEIN1878}, Klein described the curve of genus $3$ with $168$ automorphisms, proving that Hurwitz's bound is optimal. This curve is the locus of $$x^3y+y^3z+z^3x=0$$ in $\mathbb{P}^2(\mathbb{C})$ and it's known as Klein quartic. Halphen \cite{HALPHEN1884} and Hurwitz \cite{HURWITZ1886} independently obtained linear ordinary differential equations such that the image of their Schwarz maps is Klein quartic and used them to parametrize the curve.

The automorphism group of Klein quartic is the famous Klein's simple group of order $168$, $G_{168}$. The curves invariant under this group have been widely studied \cite{ADLER1999,FRICKE1893,HIRZEBRUCH1977}. In particular, Fricke in \cite{FRICKE1893} described a pencil of invariant projective curves of degree 12. For each curve in this pencil Kato constructed an ordinary linear differential equation of order 3 such that the image of its Schwarz map parametrize the given curve \cite{KATO2004}. To construct the equations Kato used some properties of the invariant curves, in particular their cusps and flexes. Furthermore, in \cite{KATO2006}  Kato proved that the Schwarz map of a linear ordinary differential equation obtained by Beukers in \cite{BEUKERS1989} parametrizes the Hessian of Klein quartic.

More generally, curves invariant under other finite linear algebraic groups have been studied. For example, the Hesse pencil \cite{HESSE1844I, HESSE1844II}, a family of elliptic curves invariant under the normal abelian subgroup of order $9$ of the Hessian group has gained much attention recently in different areas \cite{ARTEBANI2009,SMART2001,ZASLOW2005}. Lachtin in \cite{LACHTIN1898} considered curves invariant under Valentiner group, the unique triple cover of $A_6$.

The main result of this paper is Theorem \ref{theo} which is a generalization of Theorem 6 in \cite{SANABRIA2017}. From its proof we obtain the algorithm that constructs, given any curve invariant under a finite linear algebraic group over $\mathbb{C}$, an ordinary linear differential equation such that its Schwarz map parametrizes this curve. To apply this algorithm we do not use any property of the invariant curves, but instead their properties can be derived from their corresponding differential equations. In the last section we revisit the known examples of parametrizations of curves invariant under $G_{168}$ showing the information needed so that we can obtain those parametrizations using our algorithm.

A MAPLE implementation of the algorithm for third order equations can be obtained from my webpage.

\section{Preliminary}

\subsection{Space of orbits}

Let $G\subseteq GL_n(\mathbb{C})$ be a finite linear algebraic group. We define a left group action of $G$ on the coordinate ring $R=\mathbb{C}[X_1,\ldots,X_n]$ of $\mathbb{C}^n$ by $\mathbb{C}$-automorphisms
\[
X_i\mapsto \sum_{j=1}^n g_{ij}X_j,
\]
for $(g_{ij})_{i,j=1}^n\in G$.
This $G$-action on $R$ induces a right $G$-action on $\mathbb{C}^n$ defined by
\[
(x_1,\ldots,x_n)\mapsto \Big(\sum_{i=1}^ng_{ij}x_i,\ldots,\sum_{i=1}^ng_{in}x_i\Big).
\]
Since $G$ is finite, it is reductive, thus the $G$-invariant polynomials in $R$ separate the $G$-orbits in $\mathbb{C}^n$. Therefore, the coordinate ring of the orbit space $\mathbb{C}^n/G$ is the finitely generated $G$-invariant subring $R^G$.

Let $F_1,\ldots,F_N$ be generators of $R^G$. We can embed $\mathbb{C}^n/G$ into $\mathbb{C}^N$ through the algebraic map
\[
\mathbf{x}\cdot G \mapsto \Big(F_1(\mathbf{x}),\ldots,F_N(\mathbf{x})\Big),
\]
where $\mathbf{x}=(x_1,\ldots,x_n)$. We will identify the orbits space $G\backslash\mathbb{C}^n$ with its embedding in $\mathbb{C}^N$. Since $G$ is finite, the orbit space has dimension $n$ and therefore there exits a dense subset of $\mathbb{C}^n$ where the derivative of the quotient map
\[
\mathbf{x}\mapsto \Big(F_1(\mathbf{x}),\ldots,F_N(\mathbf{x})\Big)
\]
is non-singular.

Similarly, these $G$-actions define $G$-actions on $\mathbb{P}^{n-1}(\mathbb{C})$ and on its homogeneous coordinate ring $R$. The orbits space of the action on $\mathbb{P}^{n-1}(\mathbb{C})$ is $\mathbb{P}(\mathbb{C}^n/G)$. Let $\Lambda\in\mathbb{Z}_{>0}$ be such that the homogeneous elements of $R^G$ of degree $\Lambda$, $R^G_\Lambda$, form a homogeneous coordinate system for $\mathbb{P}(\mathbb{C}^n/G)$, i.e. $$\mathbb{P}(R^G_\Lambda)\simeq\mathbb{P}(\mathbb{C}^n/G).$$

Let $\Phi_1,\ldots,\Phi_M$ be a basis of $R^G_\Lambda$. We can embed $\mathbb{P}(\mathbb{C}/G)$ into $\mathbb{P}^{M-1}(\mathbb{C})$ with the map
\[
[\mathbf{x}]\cdot G\mapsto \Big[\Phi_1(\mathbf{x}):\ldots:\Phi_M(\mathbf{x})\Big]
\]
where $[\mathbf{x}]=[x_1:\ldots:x_n]$. As before, we identify the orbits space $\mathbb{P}(\mathbb{C}^n/G)$ with its embedding in $\mathbb{P}^{M-1}(\mathbb{C})$. Again, there exits a dense subset of $\mathbb{P}^{n-1}(\mathbb{C})$ where the derivative of the quotient map
\[
[\mathbf{x}]\mapsto \Big[\Phi_1(\mathbf{x}):\ldots:\Phi_M(\mathbf{x})\Big]
\]
is non-singular.

\subsection{Schwarz maps}

Let $C_0$ be a compact Riemann surface and $K=\mathbb{C}(C_0)$ the field of meromorphic functions over $C_0$. Let $\delta:K\rightarrow K$ be any non-trivial derivation. Note that $\delta$ can be uniquely extended to the sheaf of meromorphic functions over any open set in $C_0$.

Let $L(y)=0$ be an ordinary linear differential equation of order $n$ with rational coefficients
\[
L(y)=\delta^n(y)+a_{n-1}\delta^{n-1}(y)+\ldots+a_1\delta(y)+a_0y,
\]
where $a_0,a_1,\ldots,a_{n-1}\in K$. Let $S\subset C_0$ be the collection of singularities of $L(y)$. Given a non-singular point $p\in C_0$ together with a fundamental system of solutions $(y_1,\ldots,y_n)$ over a neighborhood $U\subseteq C_0$ of $p$, we define the Schwarz map as the analytic extension of 
\begin{eqnarray*}
 U & \longrightarrow & \mathbb{P}^{n-1}(\mathbb{C})\\
z & \longmapsto & \Big[y_1(z):\ldots:y_n(z)\Big].
\end{eqnarray*}

The monodromy of the Schwarz map is the projection of the monodromy group of the fundamental system of solutions $(y_1,\ldots,y_n)$ on $PGL_n(\mathbb{C})$. We will denote it by $G_0$ and call it the projective monodromy of $L(y)=0$. Post-composing the Schwarz map with the quotient by the action of $G_0$ on  $\mathbb{P}^{n-1}(\mathbb{C})$ we obtain a single-valued map $$\psi:C_0\setminus S\rightarrow\mathbb{P}^{n-1}(\mathbb{C})/G_0$$ which we will call the quotient Schwarz map.

\section{Schwarz maps for invariant curves}

\begin{thm}\label{theo}
Let $G$ be a finite algebraic subgroup of $GL_n(\mathbb{C})$ and let $C\subseteq\mathbb{P}^{n-1}(\mathbb{C})$ be an algebraic $G$-invariant curve not contained in a projective hyperplane. Let $C_0$ be a compact Riemann surface  such that there is a dominant map $$\psi: C_0\setminus S\rightarrow C/G\subseteq\mathbb{P}(\mathbb{C}^n/G)$$ with $S\subseteq C_0$ finite and such that $C\rightarrow C/G$ is unramified over $\psi(C_0\setminus S)$. Let $K=\mathbb{C}(C_0)$ be the field of meromorphic functions over $C_0$. Then there exist
\begin{itemize}
\item[i)] a branched cover $\pi:C_1\rightarrow C_0$ where $C_1$ is a compact Riemann surface and $K\subseteq E=\mathbb{C}(C_1)$ is an abelian extension;
\item[ii)] a linear differential equation $L(y)=0$ of order $n$ with coefficients in $E$ admitting a fundamental system of solutions $(y_1(z),\ldots,y_n(z))$ over $C_1$ such that the closure of the image of its Schwarz map is $C$.
\end{itemize}
Moreover, the restriction of $\psi\circ\pi$ to $C_1\setminus\pi^{-1}(S)$ is the quotient Schwarz map of $L(y)=0$ associated to the system of solutions $(y_1(z),\ldots,y_n(z))$. 
\end{thm}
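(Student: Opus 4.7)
The plan is to construct the Schwarz map and the LDE in tandem: first lift $\psi$ along $C\to C/G$ to obtain meromorphic ``homogeneous coordinates'' $y_1,\ldots,y_n$ on a suitable cover of $C_0$, and then extract $L(y)=0$ canonically from these functions through a Wronskian.

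For the construction of the cover, I would use the hypothesis that $C\to C/G$ is unramified over $\psi(C_0\setminus S)$. The pullback $(C_0\setminus S)\times_{C/G}C$ is an unramified covering of $C_0\setminus S$, and choosing a connected component and adjoining finitely many points above $S$ yields a branched cover $\tilde{\pi}:\tilde{C}\to C_0$ of compact Riemann surfaces together with a lifted morphism $\tilde{\psi}:\tilde{C}\to C\subseteq\mathbb{P}^{n-1}$ satisfying $\pi_C\circ\tilde{\psi}=\psi\circ\tilde{\pi}$, where $\pi_C:C\to C/G$ denotes the quotient. The Galois group of $\tilde{C}\to C_0$ is a subgroup of $G$, in general non-abelian; to obtain the abelian extension $E/K$ demanded by the theorem, I would take $C_1$ to be the intermediate cover corresponding to the maximal abelian sub-extension of $\mathbb{C}(\tilde{C})/K$, leaving the non-abelian part of the $G$-monodromy to resurface as the projective monodromy $G_0$ of $L(y)=0$.

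Next, I would introduce the solutions. The pulled-back coordinates $\tilde{\psi}^*X_1,\ldots,\tilde{\psi}^*X_n$ are sections of the line bundle $\tilde{\psi}^*\mathcal{O}_{\mathbb{P}^{n-1}}(1)$ on $\tilde{C}$; dividing by any fixed meromorphic section of this line bundle produces meromorphic functions $y_1,\ldots,y_n$ (possibly multivalued when viewed over $C_1$) whose projective class recovers $\tilde{\psi}$. Since $C$ spans no hyperplane, these $y_i$ are $\mathbb{C}$-linearly independent, so their Wronskian with respect to any non-trivial derivation $\delta$ of $E$ is nonzero, and the monic equation
\[
L(y)=\frac{W(y_1,\ldots,y_n,y)}{W(y_1,\ldots,y_n)}=0
\]
is an order-$n$ LDE admitting $(y_1,\ldots,y_n)$ as a fundamental system. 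The Wronskian ratios defining the coefficients of $L$ are invariant under the full monodromy of the $y_i$'s, so they descend to meromorphic functions on $C_1$, i.e., to elements of $E$.

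The Schwarz map of $L(y)=0$ is by construction $\tilde{\psi}$, whose image closure is $C$; post-composing with $\pi_C$ identifies the quotient Schwarz map with $\pi_C\circ\tilde{\psi}=\psi\circ\pi$. The main obstacle is the abelian condition on $E/K$: one must verify that taking $C_1$ to be the abelian sub-cover of $\tilde{C}$ is still large enough for the Wronskian construction, i.e., that the scalar ambiguity introduced when trivializing $\tilde{\psi}^*\mathcal{O}(1)$ is precisely the abelian part of the $G$-monodromy, and that the coefficients of $L$ actually land in $E$ rather than requiring all of $\mathbb{C}(\tilde{C})$. Equivalently, one needs the commutator subgroup of the relevant monodromy to act on the $y_i$'s purely projectively, so that its action is absorbed into $G_0$ rather than into the deck group of $C_1\to C_0$.
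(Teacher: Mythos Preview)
Your Wronskian construction of $L(y)$ from a fundamental system is fine and is equivalent to the paper's linear-dependence argument. The genuine gap is in your identification of the abelian cover $C_1$. You propose to take $C_1$ as the maximal abelian sub-extension of $\tilde C/C_0$, where $\tilde C$ is a component of the fibre product $(C_0\setminus S)\times_{C/G}C$. But the deck group of $\tilde C/C_0$ is a subgroup of the image of $G$ in $PGL_n(\mathbb{C})$, and when that image is perfect---for instance $G=G_{168}$, which is simple non-abelian---your construction forces $C_1=C_0$ and $E=K$. Yet the paper's own examples (the degree~$36$, genus~$19$ curves) require $E=K(z^{1/7})$, a genuine degree~$7$ cyclic extension. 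So your $C_1$ is not the one the theorem is asserting.

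The point you are missing is that the abelian extension does \emph{not} come from the $G$-cover $C\to C/G$ at all; it comes from lifting the map $\psi$ from $\mathbb{P}(\mathbb{C}^n/G)$ to the affine cone $\mathbb{C}^n/G$. Concretely, the paper chooses generators $F_1,\ldots,F_N$ of $R^G$, writes the homogeneous invariants $\Phi_j$ as monomials $\prod_i F_i^{n_{ij}}$, and then \emph{extracts roots}: one picks $f_i\in\overline K$ with $\prod_i f_i^{n_{ij}}=\phi_j$ for the pulled-back ratios $\phi_j\in K$. The field $E=K(f_1,\ldots,f_N)$ is a radical (hence abelian) extension of $K$, and $C_1$ is the Riemann surface with function field $E$. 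Only \emph{after} this does one perform the $G$-lift, locally lifting $\Psi:C_1\to\mathbb{C}^n/G$ to $\widetilde\Psi:U\to\mathbb{C}^n$; the resulting $y_i$'s are multivalued over $C_1$ with monodromy in $G$, and \emph{all} of $G$ (not just its commutator) becomes the projective monodromy $G_0$. The coefficients of $L$ are symmetric in the $y_i$'s under this $G$-action, hence are polynomials in the $F_i(\mathbf y)=f_i\in E$. In short: the factorisation $\mathbb{C}^n\to\mathbb{C}^n/G\to\mathbb{P}(\mathbb{C}^n/G)$ should be lifted in the order ``projective-to-affine first (abelian, defines $C_1$), then $G$-quotient (defines the monodromy of the solutions)'', which is the reverse of what you attempted.
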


\begin{proof}
Let $F_1,\ldots,F_N$ be generators of $R^G$. Let $\Lambda\in\mathbb{Z}_{>0}$ be such that the homogeneous elements of $R^G$ of degree $\Lambda$, $R^G_\Lambda$, form a homogeneous coordinate system for $\mathbb{P}(\mathbb{C}^n/G)$. Let $\Phi_1,\ldots,\Phi_M$ be a basis of $R^G_\Lambda$. Let $\delta:K\rightarrow K$ be a non-trivial derivation.

Let $d\in\{1,\ldots,M\}$ be such that $C\not\subseteq\{\Phi_d=0\}$. Then the map $\psi$ is completely determined by the pullback functions $\psi^*\left(\Phi_i/\Phi_d\right)$, $i=1,\ldots,M$.

Let $\phi_1,\ldots,\phi_M\in K$ be such that 
\begin{eqnarray*}
\frac{\phi_i}{\phi_d} & = & \psi^*\left(\frac{\Phi_i}{\Phi_d}\right)
\end{eqnarray*}
for $i=1,\ldots,M$, and let $f_1,\ldots,f_{N}\in \overline{K}$ be such that
\begin{eqnarray}
\prod_{i=1}^Nf_i^{n_i} & = & \phi_j\label{Ff}
\end{eqnarray}
whenever $\prod_{i=1}^NF_i^{n_i}=\Phi_j$, for $j=1,\ldots,M$. Hence the extension $K\subseteq K(f_1,\ldots,f_N)$ is abelian.

Let $E=K(f_1,\ldots,f_N)$. The unique extension of $\delta$ to a derivation in $E$ will also be denoted by $\delta$. Let $C_1$ be a compact Riemann surface such that $\mathbb{C}(C_1)\simeq E$ and let $\pi:C_1\rightarrow C_0$ be the map defined by the extension $K\subseteq E$. Then the map $\pi\circ\psi$ is the composition of two algebraic maps, $\Pi\circ\Psi$, where
$$\Psi:C_1\setminus \pi^{-1}(S)\rightarrow(\mathbb{C}^n\setminus\{\overline{0}\})/G$$
is such that $f_i=\Psi^*(F_i)$, for $i=1,\ldots,N$, and $\Pi$ is the canonical projection
$$\Pi:(\mathbb{C}^n\setminus\{\overline{0}\})/G\rightarrow\mathbb{P}(\mathbb{C}^n/G).$$

Let $p\in C_1\setminus \pi^{-1}(S)$ and let us consider a lifting $\widetilde{\Psi}: U\rightarrow\mathbb{C}^n\setminus\{\overline{0}\}$ of $\Psi$ over a simply connected neighborhood $U\subseteq C_1\setminus \pi^{-1}(S)$ of $p$
\[
\xymatrix{
 &  & (y_1(z),\ldots,y_n(z))\ar@{|->}[d] & \mathbb{C}^n\setminus\{\overline{0}\}\ar[d] \\
z\ar@{|->}[urr]^{\widetilde{\Psi}}\ar@{|-->}[rr]_{\Psi} & & (y_1(z),\ldots,y_n(z))\cdot G & (\mathbb{C}^n\setminus\{\overline{0}\})/G
}
\]
If we differentiate  with $\delta$ the equations $$f_i=\Psi^*(F_i),\quad i=1,\ldots,N$$ we obtain
\[
\frac{\partial F_i}{\partial X_1}\delta(y_1)+\ldots+\frac{\partial F_i}{\partial X_n}\delta(y_n)=  \delta(f_i),\quad i=1,\ldots,N.
\]
After arranging the indices of the $F_i$'s if necessary, we have that
$$\frac{\partial(F_1,\ldots,F_n)}{\partial(X_1,\ldots,X_n)}(\mathbf{x})=\left[\begin{array}{ccc}
\partial P_1/\partial X_1 & \ldots & \partial P_1/\partial X_n\\
\vdots & \ddots & \vdots\\
\partial P_n/\partial X_1 & \ldots & \partial P_n/\partial X_n
\end{array}\right](\mathbf{x})$$
is invertible over $U$. If $\det\Big(\frac{\partial(F_1,\ldots,F_n)}{\partial(X_1,\ldots,X_n)}\Big)(\mathbf{y})\ne 0$, where $\mathbf{y}=(y_1,\ldots,y_n)$, then we obtain a system of first order ordinary differential equations characterizing the functions $y_1(z),\ldots ,y_n(z)$
\[
\delta\left[\begin{array}{c}
y_1 \\ \vdots \\ y_n
\end{array}\right] =
\left[\frac{\partial(F_1,\ldots,F_n)}{\partial(X_1,\ldots,X_n)}(\mathbf{y})\right]^{-1}\ \delta\left[\begin{array}{c}
f_1 \\ \vdots \\ f_n
\end{array}\right].
\] 
Differentiating this system $n-1$ times with $\delta$, we obtain expressions for
$$\delta^k\left[\begin{array}{c}
y_1 \\ \vdots \\ y_n
\end{array}\right],\quad k=2,\ldots,n$$
in terms of the derivatives of $f_1,\ldots, f_n$, and the partial derivatives of $F_1,\ldots,F_n$. The linear dependence relations
$$c_0\left[\begin{array}{c}
y_1 \\ \vdots \\ y_n
\end{array}\right]+c_1
\delta\left[\begin{array}{c}
y_1 \\ \vdots \\ y_n
\end{array}\right]+\quad
\ldots\quad+
c_{n-1}\delta^{n-1}\left[\begin{array}{c}
y_1 \\ \vdots \\ y_n
\end{array}\right]+
c_n\delta^n\left[\begin{array}{c}
y_1 \\ \vdots \\ y_n
\end{array}\right]=0$$
are invariant under $G$, hence $$c_m\in\mathbb{C}\left(F_i(\mathbf{y}),\delta^k f_j\Big|\ i,j,k=1,\ldots,n\right),\quad m=0,\ldots,n.$$ Furthermore, since $F_i(\mathbf{y})=\Psi^*(F_i)=f_i$, for $i=1,\ldots,N$, we have $$c_m\in\mathbb{C}\left(\delta^kf_j\Big|\ j=1,\ldots,N,\ k=0,1,\ldots,n\right)\subseteq E,\quad m=0,\ldots,n.$$
Therefore, the linear differential equation $$c_n\delta^n(y)+c_{n-1}\delta^{n-1}(y)+\ldots+c_1\delta(y)+c_0y=0$$ has fundamental system of solutions $(y_1(z),\ldots,y_n(z))$ whose analytic extension parametrizes a $G$-invariant curve in $\mathbb{C}^n\setminus\{(0,0,0)\}$. Passing to homogeneous coordinates, the analytic extension of $[y_1(z):\ldots:y_n(z)]$ defines a Schwarz map whose image is a $G$-invariant curve in $\mathbb{P}^{n-1}(\mathbb{C})$ whose closure is $C$. By construction the quotient Schwarz map is $\psi\circ\pi$.
\end{proof}

\subsection*{The algorithm}

Here we present the algorithm that computes the equation $L(y)=0$ from Theorem \ref{theo}.\newline
\textbf{Input}: $F_1,\ldots,F_N$ generators of $R^G$ such that $\det\Big(\frac{\partial(F_1,\ldots,F_n)}{\partial(X_1,\ldots,X_n)}\Big)\ne 0$, and $f_1,\ldots,f_N\in\overline{K}$ as in (\ref{Ff}).\newline
\textbf{Output}: $c_{n-1},\ldots,c_1,c_0\in K(f_1,\ldots,f_N)$ such that the linear ordinary differential equation $\delta^n(y)+c_{n-1}\delta^{n-1}(y)+\ldots+c_1\delta(y)+c_0y=0$ has a fundamental system of solutions whose associated Schwarz map parametrizes $C$.
\begin{itemize}
\item[(i)] Let $X_1,\ldots,X_n$ be variables. Extend the derivative $\delta$ to $K(X_1,\ldots,X_n)$ by defining $\delta X_i=0$, for $i=1,\ldots,n$.
\item[(ii)] Define recursively for $j=1,\ldots,n$
\begin{eqnarray*}
\left[\begin{array}{c}
X_{1,0} \\ \vdots \\ X_{n,0}
\end{array}\right] & = & \left[\begin{array}{c}
X_{1} \\ \vdots \\ X_{n}
\end{array}\right],\\
\left[\begin{array}{c}
X_{1,j} \\ \vdots \\ X_{n,j}
\end{array}\right]  & = & \frac{\partial(X_{1,j-1},\ldots,X_{n,j-1})}{\partial(X_1,\ldots,X_n)}\Big[\frac{\partial(F_1,\ldots,F_n)}{\partial(X_1,\ldots,X_n)}\Big]^{-1}\delta\left[\begin{array}{c}
y_1 \\ \vdots \\ y_n
\end{array}\right]\\
 & & \qquad +\quad \left[\begin{array}{c}
\delta X_{1,j-1} \\ \vdots \\ \delta X_{n,j-1}
\end{array}\right].
\end{eqnarray*}
\item[(iii)] Solve for $C_0,C_1,\ldots,C_n\in K(f_1,\ldots,f_N)(X_1,\ldots,X_n)$ the system
$$\left[\begin{array}{cccc}
X_{1,0} & X_{1,1} & \cdots & X_{1,n}\\
\vdots & \vdots & \ddots & \vdots \\
X_{n,0} & X_{n,1} & \cdots & X_{n,n}\\
\end{array}\right] \left[\begin{array}{c}
C_0 \\ \vdots \\ C_n
\end{array}\right] = \left[\begin{array}{c}
0 \\ \vdots \\ 0
\end{array}\right].$$
\item[(iv)] For $i=0,\ldots,n-1$, compute $Q_i(F_1,\ldots,F_N)\in K(f_1,\ldots,f_N)(F_1,\ldots,F_N)$ such that $$Q_i(F_1,\ldots,F_N)=\frac{C_i}{C_n}.$$
\item[(v)] \textbf{Return}: $c_i=Q_i(f_1,\ldots,f_n)$ for $i=0,\ldots,n-1$.
\end{itemize}

\section{Properties of the images of Schwarz maps}

Let $G$ be a finite algebraic subgroup of $GL_n(\mathbb{C})$. Let $C\subseteq\mathbb{P}^{n-1}(\mathbb{C})$ be an algebraic $G$-invariant curve as in Theorem \ref{theo}. We showed that we can obtain a linear ordinary differential equation $L(y)=0$ admitting a system of solutions whose associated Schwarz map parametrizes $C$.

The following proposition summarizes the properties of $C$ that we can derive from its associated equation $L(y)=0$. Note that assertion i) is Proposition 3.1 in \cite{KATO2006}, iii) is Proposition 18 in \cite{SANABRIA2014}, and  ii) and iv) follow from generalizations of the proofs of Proposition 3.1 and Theorem 3.6 in \cite{KATO2006} respectively.

\begin{prop}\label{prop}
Let $p\in\pi^{-1}(S)$ and let $$\{e_p,e_p+\frac{\nu_p}{r_p},e_p+\frac{\nu_p+\lambda_{1,p}}{r_p},\ldots,e_p+\frac{\nu_p+\lambda_{n-2,p}}{r_p}\}$$
be the characteristic exponents of $L(y)$  at $p$, where $r_p$, $\nu_p$, $\lambda_{1,p},\ldots,\lambda_{n-2,p}$ are positive integers such that $(r_p,\nu_p,\lambda_{1,p},\ldots,\lambda_{n-2,p})=1$. Let $P\in C$ be a point above $\psi\circ\pi(p)$, i.e. $P\cdot G=\psi\circ\pi(p)$.
\begin{itemize}
\item[i)] In the case $\deg(\psi\circ\pi)=1$ and $n=3$ we have that $P$ is a $(\nu_p,\nu_p+\lambda_{1,p})$-cusp if $\nu_p\ge 2$ and a $(1,1+\lambda_{1,p})$-flex if $\nu_p=1$ and $\lambda_{1,p}\ge 2$.
\item[ii)] The curve $C$ is smooth at $P$ if and only if $\nu_p=\lambda_{1,p}=\ldots=\lambda_{n-2,p}=1$ and $p$ is an apparent singularity if and only if $r_p=1$.
\item[iii)] The Euler characteristic of $C$ is
$$\chi=|G|\left(\sum_{p\in S}\left(\frac{1}{r_p}-1\right)-2\right).$$
\item[iv)] Let $m=\deg(\psi\circ\pi)$ and let $g$ be the genus of $C$. The degree of $C$ is
$$n=-\frac{g}{m}\sum_{p\in S}e_p.$$
\end{itemize}
\end{prop}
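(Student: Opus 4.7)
The plan is to analyze all four assertions through a single local picture near each $p\in\pi^{-1}(S)$. Fix a local uniformizer $t$ at $p$ and a Frobenius basis of solutions of $L(y)=0$ adapted to the given characteristic exponents. Passing to the $r_p$-ramified parameter $s=t^{1/r_p}$, dividing through by $s^{r_p e_p}$, and using the Schwarz map $[y_1:\ldots:y_n]$ produced by Theorem~\ref{theo}, one obtains a local parametrization of the form
\[
s\longmapsto\bigl[1:s^{\nu_p}u_1(s):s^{\nu_p+\lambda_{1,p}}u_2(s):\cdots:s^{\nu_p+\lambda_{n-2,p}}u_{n-1}(s)\bigr],
\]
with $u_k(s)$ holomorphic units at $s=0$. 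Assertions (i) and (ii) can be read off from this display by inspection, while (iii) and (iv) follow by globalising.

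For (i), which is Proposition~3.1 of \cite{KATO2006}, the germ reduces (when $n=3$ and $\deg(\psi\circ\pi)=1$) to the plane curve germ $(s^{\nu_p},s^{\nu_p+\lambda_{1,p}})$ up to units, classically a $(\nu_p,\nu_p+\lambda_{1,p})$-cusp or a $(1,1+\lambda_{1,p})$-flex according to the stated dichotomy. For (ii), I would translate the multiplicity and tangent-space conditions for smoothness of $C$ at $P$ into the stated numerical condition on $\nu_p$ and the $\lambda_{i,p}$, while the apparent-singularity clause comes from the standard identification of $r_p=1$ with triviality of the local projective monodromy and the absence of logarithmic terms.

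For (iii), I would apply Riemann--Hurwitz to the Galois cover $C\to C/G$, which is unramified over $\psi(C_0\setminus S)$ by hypothesis; identifying the ramification over the images of the points of $S$ with the orders $r_p$ of the local projective monodromy, summing the contributions $(1/r_p-1)$ over $p\in S$, and multiplying by $|G|$ yields the stated formula. This is the scheme of Proposition~18 of \cite{SANABRIA2014}. For (iv), I would compute $\deg C$ as the intersection count of $C$ with a generic hyperplane and transfer this, via the Schwarz parametrization, to a count of zeros on $C_1$ of a generic linear combination of $y_1,\ldots,y_n$. At each $p\in\pi^{-1}(S)$ the leading order of vanishing of such a combination is controlled by $e_p$; rearranging the global count in terms of the degree $m$ of $\psi\circ\pi$ and the genus $g$ of $C$ then yields $n=-(g/m)\sum_{p\in S}e_p$, generalising Theorem~3.6 of \cite{KATO2006}.

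The step I expect to be the main obstacle is (iv): one must pin down the degree of the Schwarz parametrization $C_1\to C$ in terms of $m$ and $g$, and then account correctly for the local contribution of each singular point --- including the apparent ones where $r_p=1$ --- so that the global count rearranges into the stated closed form. By contrast, (iii) is tamer because Riemann--Hurwitz absorbs most of the bookkeeping, and (i) and (ii) reduce to algebraic geometry on the local germ once the Frobenius analysis is in place.
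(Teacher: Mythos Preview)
The paper does not supply a proof of this proposition at all: the paragraph preceding it simply records that (i) is Proposition~3.1 of \cite{KATO2006}, (iii) is Proposition~18 of \cite{SANABRIA2014}, and (ii) and (iv) follow by generalising the proofs of Proposition~3.1 and Theorem~3.6 of \cite{KATO2006}. Your proposal cites exactly these sources and sketches the standard arguments behind them (local Frobenius analysis for (i)--(ii), Riemann--Hurwitz for (iii), a zero-count of a generic linear form for (iv)), so you are following the same route the paper points to, only with more detail than the paper itself provides.
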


\section{Examples}

\subsection*{Invariants of Klein's group}
Klein's simple group of order $168$, $G_{168}$, is isomorphic to the subgroup of $SL_3(\mathbb{C})$ generated by the matrices
\[
\left[\begin{array}{ccc}
\beta & 0 & 0\\
0 & \beta^2 & 0\\
0 & 0 & \beta^4
\end{array}\right],\quad
\left[\begin{array}{ccc}
0 & 1 & 0\\
0 & 0 & 1\\
1 & 0  & 0
\end{array}\right],\textrm{ and }
\left[\begin{array}{ccc}
a  & b & c\\
b & c & a\\
c & a  & b
\end{array}\right],
\]
where $\beta$ is a primitive $7$-th root of unity (i.e. $\beta^6+\beta^5+\beta^4+\beta^3+\beta^2+\beta^1=0$), $a=\beta^4-\beta^3$, $b=\beta^2-\beta^5$, and $c=\beta-\beta^6$. Note that the image of $G_{168}$ under the projection $GL_3(\mathbb{C})\rightarrow PGL_3(\mathbb{C})$ is isomorphic to $G_{168}$. 

The invariant subring $\mathbb{C}[X_1,X_2,X_3]^{G_{168}}$ is generated by
\begin{eqnarray*}
F_4 & = & X_1^3X_2+X_2^3X_3+X_3^3X_1,\\
F_6 & = & \frac{1}{54}\det\left[\begin{array}{ccc}
\partial^2 F_4/\partial X_1\partial X_1 & \partial^2 F_4/\partial X_1\partial X_2 & \partial^2 F_4/\partial X_1\partial X_3 \\
\partial^2 F_4/\partial X_2\partial X_1 & \partial^2 F_4/\partial X_2\partial X_2 & \partial^2 F_4/\partial X_2\partial X_3 \\
\partial^2 F_4/\partial X_3\partial X_1 & \partial^2 F_4/\partial X_3\partial X_2 & \partial^2 F_4/\partial X_3\partial X_3 \\
\end{array}\right],\\
F_{14} & =  & \frac{1}{9}\det\left[\begin{array}{cccc}
\partial^2 F_4/\partial X_1\partial X_1 & \partial^2 F_4/\partial X_1\partial X_2 & \partial^2 F_4/\partial X_1\partial X_3 & \partial F_6/\partial X_1\\
\partial^2 F_4/\partial X_2\partial X_1 & \partial^2 F_4/\partial X_2\partial X_2 & \partial^2 F_4/\partial X_2\partial X_3 & \partial F_6/\partial X_2\\
\partial^2 F_4/\partial X_3\partial X_1 & \partial^2 F_4/\partial X_3\partial X_2 & \partial^2 F_4/\partial X_3\partial X_3 & \partial F_6/\partial X_3\\
\partial F_6/\partial X_1 & \partial F_6/\partial X_2 & \partial F_6/\partial X_3 & 0
\end{array}\right],
\end{eqnarray*}
and
\begin{eqnarray*}
F_{21} & = & \frac{1}{14}\det\left[\begin{array}{ccc}
\partial F_4/\partial X_1 & \partial F_4/\partial X_2 & \partial F_4/\partial X_3 \\
\partial F_6/\partial X_1 & \partial F_6/\partial X_2 & \partial F_6/\partial X_3 \\
\partial F_{14}/\partial X_1 & \partial F_{14}/\partial X_2 & \partial F_{14}/\partial X_3
\end{array}\right].
\end{eqnarray*}
As a ring, $\mathbb{C}[X_1,X_2,X_3]^{G_{168}}$ is isomorphic to $\mathbb{C}[\Phi_4,\Phi_6,\Phi_{14},\Phi_{21}]/(T)$ where
\begin{eqnarray*}
T & = & 2048\Phi_4^9\Phi_6 -22016\Phi_4^6\Phi_6^3 -256\Phi_{14}\Phi_4^7 +60032\Phi_4^3\Phi_6^5 +1088\Phi_{14}\Phi_4^4\Phi_6^2\\ & & -1728\Phi_6^7 +1008\Phi_{14}\Phi_4\Phi_6^4 +88\Phi_{14}^2\Phi_4^2\Phi_6 +\Phi_{14}^3-\Phi_{21}^2.
\end{eqnarray*}

\subsection{Differential equations for $G_{168}$}

In this section we consider the known examples of parametrizations of curves invariant under $G_{168}$. We show the input required by our algorithm in order to obtain the ordinary linear differential equations associated to these parametrizations. The corresponding functions defining the quotient Schwarz map $f_4$, $f_6$ and $f_{14}$ were obtained using the algorithm in \cite{VANHOEIJ1997}. 

\subsubsection*{Klein quartic}

Input: $F_4$, $F_6$, $F_{14}$, $f_4(z)=0$, $f_6(z)=\dfrac{1}{z^4}$, $f_{14}(z)=-\dfrac{12}{z^9}$\newline
Output: Hurwitz equation \cite{HURWITZ1886}
\begin{eqnarray*}
0 & = & \left(\frac{d}{dz}\right)^3y+\frac{7z-4}{z(z-1)}\left(\frac{d}{dz}\right)^2y+\frac{1}{252}\frac{2592z^2-2963z+560}{z^2(z-1)^2}\left(\frac{d}{dz}\right)y \\
 & & \ +\frac{1}{24696}\frac{57024z-40805}{z^2(z-1)^2}y
\end{eqnarray*}

\subsubsection*{Hessian of Klein quartic}

Input: $F_4$, $F_6$, $F_{14}$, $f_4(z)=\dfrac{1}{z}$, $f_6(z)=0$, $f_{14}(z)=\dfrac{16}{z^3}$\newline
Output: Class No. 3 in \cite{BEUKERS1989}
\begin{eqnarray*}
0 & = & \left(\frac{d}{dz}\right)^3y+\frac{3}{2}\frac{3z-2}{z(z-1)}\left(\frac{d}{dz}\right)^2y+\frac{3}{112}\frac{116z-35}{z^2(z-1)}\left(\frac{d}{dz}\right)y \\
 & & \ +\frac{195}{2744}\frac{1}{z^2(z-1)}y
\end{eqnarray*}

\subsubsection*{Invariant curves of degree 36 and genus 19}

Input: $F_4$, $F_6$, $F_{14}$, $f_4(z)=8z^{\frac{3}{7}}$, $f_6(z)=3z^{\frac{8}{7}}$, $f_{14}(z)=-4(z^3-1008z^2+9216z-16384)$\newline
Output: Class No. 4.1 in \cite{BEUKERS1989}
\begin{eqnarray*}
0 & = & \left(\frac{d}{dz}\right)^3y+\frac{1}{14}\frac{41z-20}{z(z-1)}\left(\frac{d}{dz}\right)^2y+\frac{1}{196}\frac{173z+16}{z^2(z-1)}\left(\frac{d}{dz}\right)y \\
 & & \ +\frac{9}{2744}\frac{1}{z^2(z-1)}y
\end{eqnarray*}
Input: $F_4$, $F_6$, $F_{14}$, $f_4(z)=-32z^{\frac{3}{7}}$, $f_6(z)=z^{\frac{1}{7}}(5z-128)$, $f_{14}(z)=4(z^3-14624z^2+591872z-16384).$\newline
Output: Class No. 4.2 in \cite{BEUKERS1989}
\begin{eqnarray*}
0 & = & \left(\frac{d}{dz}\right)^3y+\frac{1}{14}\frac{41z-20}{z(z-1)}\left(\frac{d}{dz}\right)^2y+\frac{1}{196}\frac{173z+72}{z^2(z-1)}\left(\frac{d}{dz}\right)y \\
 & & \ +\frac{9}{2744}\frac{1}{z^2(z-1)}y
\end{eqnarray*}

\subsubsection*{Fricke degree $12$ pencil}

Input: $F_4$, $F_6$, $F_{14}$, $f_4(z)=(-\mu)^{-\frac{1}{9}}$, $f_6(z)=(-\mu)^\frac{1}{3}$, $f_{14}(z)=(-\mu)^{-\frac{1}{9}}(z+\frac{88}{3})$\newline
Output: Equation (3.9) in \cite{KATO2004} 
\begin{eqnarray*}
0 & = & \left(\frac{d}{dz}\right)^3y+\left(\frac{3}{2}\frac{\partial}{\partial z}P(\mu,z)-\frac{1}{z-z_4}\right)\left(\frac{d}{dz}\right)^2y+\left(\frac{43}{28}z+\frac{1729\mu^2-3628\mu-640}{21\mu}\right.\\
 & & \ \left.-\frac{(2187\mu^2-15004\mu-3200)(27\mu+4)(\mu-4)}{63\mu^2(z-z_4)}\right)\frac{1}{P(\mu,z)}\left(\frac{d}{dz}\right)y \\
 & & \ +\left(-\frac{15}{14^3}-\frac{5(27\mu+4)(\mu-4)}{196\mu(z-z_4)}\right)\frac{1}{P(\mu,z)}y
\end{eqnarray*}
where
\begin{eqnarray*}
z_4 & = & -\frac{81\mu^2-432\mu-80}{3\mu},\\
P(\mu,z) & = & -\frac{\delta_4(27\mu+4)^2(\mu-4)^2}{27\mu^3},
\end{eqnarray*}
and
\begin{eqnarray*}
\delta_4 & = & 729\mu^2-7120\mu-2000.
\end{eqnarray*}

\bibliographystyle{plain}
\bibliography{DGT}

\end{document}